\definecolor{vegasgold}{rgb}{0.77, 0.7, 0.35}
\definecolor{darkgoldenrod}{rgb}{0.72, 0.53, 0.04}
\definecolor{gold(metallic)}{rgb}{0.83, 0.69, 0.22}
\DeclareFontFamily{U}{wncy}{}
\DeclareFontShape{U}{wncy}{m}{n}{<->wncyr10}{}
\DeclareSymbolFont{mcy}{U}{wncy}{m}{n}
\DeclareMathSymbol{\Sh}{\mathord}{mcy}{"58}
\newtheorem{theorem}{Theorem}[section]
\newtheorem{lem}[theorem]{Lemma}
\newtheorem{prop}[theorem]{Proposition}
\newtheorem{ass}[theorem]{Assumption}
\newtheorem{defn}[theorem]{Definition}
\newtheorem{question}[theorem]{Question}
\newcommand{\cF}{\mathcal{F}}
\newcommand{\cX}{\mathfrak{X}}
\newcommand{\cY}{\mathfrak{Y}}
\newcommand{\cH}{\mathcal{H}}
\newcommand{\Z}{\mathbb{Z}}
\newcommand{\Q}{\mathbb{Q}}
\newcommand{\cL}{\mathcal{L}}
\newcommand{\cO}{\mathcal{O}}
\newcommand{\op}[1]{\operatorname{#1}}
\numberwithin{equation}{section}
\begin{document}

\title[On the corank of the fine Selmer group of an elliptic curve]{On the corank of the fine Selmer group of an elliptic curve over a $\Z_p$-extension}

\author[A.~Ray]{Anwesh Ray}
\address[A.~Ray]{Department of Mathematics\\
University of British Columbia\\
Vancouver BC, Canada V6T 1Z2}
\email{anweshray@math.ubc.ca}

\maketitle

\begin{abstract}
Let $p$ be an odd prime and $F_\infty$ be a $\Z_p$-extension of a number field $F$. Given an elliptic curve $E$ over $F$, we study the structure of the \emph{fine Selmer group} over $F_\infty$. It is shown that under certain conditions, the fine Selmer group is a cofinitely generated module over $\Z_p$ and furthermore, we obtain an upper bound for its corank (i.e., the $\lambda$-invariant), in terms of various local and global invariants.
\end{abstract}

\section{Introduction}
\par The variation of arithmetic objects in families of number fields of interest is a central theme in number theory. Iwasawa studied the variation of class numbers of number fields in certain infinite towers of number field extensions. In greater detail, let $p$ be a prime number and $F$ be a number field. Set $\Z_p$ to denote the ring of $p$-adic integers, defined to be the inverse limit $\varprojlim_n \Z/p^n\Z$. A \emph{$\Z_p$-tower} consists of a family of Galois extensions $\{F_n\}$ of $F$, satisfying the containments
\[F=F_0\subset F_1\subset F_2\subset \dots \subset F_n\subset F_{n+1}\subset \dots,\] and such that $\op{Gal}(F_n/F)$ is isomorphic to $\Z/p^n\Z$. The infinite Galois extension $F_\infty:=\cup_{n\geq 1} F_n$ is referred to a $\Z_p$-extension, since the Galois group $\op{Gal}(F_\infty/F)$ is isomorphic to $\Z_p$. Let $e_n$ be the highest power of $p$ dividing the class number of $F_n$. Iwasawa showed that there are integers $\mu, \lambda\in \Z_{\geq 0}$ and $\nu\in \Z$ such that for all sufficiently large values of $n$, we have that $e_n=p^n \mu+n \lambda+\nu$ (cf. \cite{iwasawa1973zl} or \cite[Chapter 13]{washington1997introduction}). Let $\mu_{p^\infty}$ be the $p$-primary roots of unity in $\bar{F}$; the \emph{cyclotomic $\Z_p$-extension of $F$} is the unique $\Z_p$-extension of $F$ contained in $F(\mu_{p^\infty})$. It is conjectured by Iwasawa that for the cyclotomic $\Z_p$-extension of $F$, the $\mu$-invariant vanishes. In other words, Iwasawa conjectured that over the cyclotomic $\Z_p$-extension of $F$, the $p$-primary part of the Hilbert class group is cofinitely generated as a $\Z_p$-module. The corank of this $\Z_p$-module is the $\lambda$-invariant. 
\par Mazur (cf. \cite{mazur1972rational}) initiated the Iwasawa theory of abelian varieties with good ordinary reduction. The original motivation was to study the growth of Selmer groups, Tate-Shafarevich groups and Mordell--Weil ranks of an abelian variety in a $\Z_p$-tower of number fields. The Selmer groups in question are defined by local conditions at each prime, defined in terms of certain local Kummer maps. These Selmer groups capture various arithmetic properties of the elliptic curves or abelian varieties in question. A closer analogue with the Hilbert class groups holds for the \emph{fine Selmer group}, which is the subgroup defined by \emph{strict} local conditions at all primes (cf. Definition \ref{fine selmer def}). The fine Selmer group appeared in various guises in work of Perrin-Riou (cf. \cite{perrin1993fonctions, perrin1995fonctions}), Rubin (cf. \cite{rubin2000euler}) and Kato (cf. \cite{kato2004p}). Their algebraic properties were systematically studied by Coates and Sujatha in \cite{coates2005fine}, with an emphasis on the vanishing of the $\mu$-invariant. Indeed, it is conjectured that the fine Selmer group of an elliptic curve over the cyclotomic $\Z_p$-extension of a number field is a finitely generated $\Z_p$-module (cf. Conjecture A in \emph{loc. cit.}). In recent work (cf. \cite{deo2022mu}), it it shown that this conjecture is indeed satisfied over the cyclotomic $\Z_p$-extension, for various Galois representations of interest, provided certain additional conditions are satisfied.
\par When the $\mu$-invariant is known to vanish, the $\Z_p$-corank of the fine Selmer group is equal to the $\lambda$-invariant. The main focus of this paper is to shed light on the $\lambda$-invariant of the fine Selmer group of an elliptic curve over a general $\Z_p$-extension. For the classical Selmer groups of elliptic curve with good ordinary reduction, the $\lambda$-invariant over the cyclotomic $\Z_p$-extension can be computed since there are efficient algorithms to compute the $p$-adic L-function \cite{stein2007sage}. This calculation is possible since it the \emph{main conjecture} is known to hold in this context, thanks to the seminal work of Skinner and Urban \cite{skinner2014iwasawa}. The $\Z_p$-corank of the fine Selmer group is shrouded in mystery since there are not many satisfactory computational tools to obtain a precise description of the characteristic ideal except for certain trivial cases. It should be noted here that one possible techniques used in obtaining some insight into the $\lambda$-invariant is via the \emph{Euler characteristic formula} for the leading term of the characteristic series (cf. \cite[section 6]{wuthrich2007iwasawa}). If the formula for the leading term is a $p$-adic unit, then, under further hypotheses, the $\lambda$-invariant may be exactly computed to be equal to $\op{max}\{0, r-1\}$, where $r$ is the Mordell-Weil rank of the elliptic curve in question. Such ideas are used in studying distribution questions for the Iwasawa invariants of the fine Selmer groups of elliptic curves (cf. \cite{ray2021arithmetic}). However, this technique does not provide any upper bound on the $\lambda$-invariant, when the leading term is not a $p$-adic unit. The upper bounds obtained in this paper can be expressed in terms of various local and global invariants. Theorem \ref{th 4.5} describes and upper bound which holds in generality. The formula is better understood in a special case in which the global invariants in our formula are shown to vanish, see Theorem \ref{main thm}. We provide a concrete example to better clarify the main results of the paper.


\section{Preliminary Notions}
\par In this section, we introduce relevant notation and recall the definition and basic properties of fine Selmer groups associated to elliptic curves. Let $F$ be a number field. Throughout, $p$ will be an odd prime number. Fix an algebraic closure $\bar{F}$ of $F$. For each prime $v$ of $F$, set $F_v$ to denote the completion of $F$ at $v$, and let $\bar{F}_v$ be a choice of algebraic closure of $F_v$. Fix an embedding $\iota_v:\bar{F}\hookrightarrow \bar{F}_v$ for each prime $v$ of $F$. Given a field $\cF$ contained in $\bar{F}$, let $\op{G}_{\cF}$ denote the absolute Galois group $\op{Gal}(\bar{F}/\cF)$. 
\par Let $F_\infty$ be a \emph{$\Z_p$-extension} of $F$, i.e., $F_\infty$ is an infinite Galois extension of $F$ for which $\op{Gal}(F_\infty/F)$ is topologically isomorphic to $\Z_p$. For $n\in \Z_{\geq 1}$, we set $F_n$ to be the extension of $F$ such that $[F_n:F]=p^n$. Note that $F_n$ is a Galois extension of $F$ and that $\op{Gal}(F_n/F)$ is isomorphic to $\Z/p^n\Z$. Let $E_{/F}$ be an elliptic curve with good reduction at all primes $v|p$ of $F$. Let $S$ be the set of primes $v$ of $F$ such that either $v|p$ or such that $E$ has bad reduction at $v$. Let $S_p$ be the set of primes of $F$ that lie above $p$. Given a set of primes $\Sigma$ of $F$ and an algebraic extension $\cF/F$, let $\Sigma(\cF)$ be the set of primes of $\cF$ that lie above the set $\Sigma$. Given a prime $v$ of $F$, set $v(\cF)$ to be the set of primes of $\cF$ that lie above $v$.

\par Let $\cF$ be an extension of $F$ contained in $\bar{F}$. For a prime $w$ in $\cF$, let $\cF_w$ be the completion of $\cF$ at $w$. For ease of notation, $H^i(\cF_w, \cdot)$ (resp. $H^i(\cF_w, \cdot)$) will be used as a shorthand for $H^i(\op{G}_{\cF}, \cdot)$ (resp. $H^i(\op{G}_{\cF_w}, \cdot)$). Given a prime $v$ of $F$, we set $\cH_v\left(\cF, E[p^\infty]\right)$ to denote the following (possibly infinite) product
\[\cH_v\left(\cF, E[p^\infty]\right):=\prod_{w\in v(\cF)} H^1(\cF_w, E[p^\infty]).\] Note that all primes $v\notin S_p$ are unramified in any $\Z_p$-extension $F_\infty$. Therefore, $F_\infty$ is contained in $F_S$. Let $F_S$ denote the maximal extension of $F$ contained in $\bar{F}$ in which all the primes $v\notin S$ are unramified. Assume that $\cF$ is contained in $F_S$.
The $p$-primary fine Selmer group of $E$ over $\cF$ is defined as follows
\begin{equation}\label{fine selmer def}R\left(E/\cF\right):=\op{ker}\left\{H^1(F_S/\cF, E[p^\infty])\rightarrow \bigoplus_{v\in S} \cH_v\left(\cF,E[p^\infty]\right)\right\},\end{equation} where the map in question is the natural map induced by restriction to $\cF_w$ as $w$ ranges over the set $S(\cF)$.

We set $\Gamma$ to denote the Galois group $\Gamma:=\op{Gal}\left(F_\infty/F\right)$. The Iwasawa algebra $\Lambda$ is the completed group algebra $\Lambda=\varprojlim_n \Z_p[\Gamma/\Gamma^{p^n}]$. Let $\gamma$ be a topological generator of $\Gamma$, and set $T$ to denote $(\gamma-1)$. We identify the Iwasawa algebra $\Lambda$ with the formal power series ring $\Z_p\llbracket T\rrbracket$. A polynomial $f(T)\in \Lambda$ is said to be \emph{distinguished} if it is a nonconstant monic polynomial and all of its non-leading coefficients are divisible by $p$.

\par We introduce the Iwasawa invariants associated to a finitely generated module over $\Lambda$. A map of $\Lambda$-modules $M_1\rightarrow M_2$ is said to be a \emph{pseudo-isomorphism} if its kernel and cokernel are both finite. Let $M$ be a finitely generated module over $\Lambda$. According to the structure theorem of $\Lambda$-modules \cite[Chapter 13]{washington1997introduction}, there is a pseudoisomorphism of the form
\[M\longrightarrow \Lambda^r\oplus \left(\bigoplus_{i=1}^s \Lambda/(p^{\mu_i})\right)\oplus \left(\bigoplus_{j=1}^t \Lambda/\left(f_j(T)^{\lambda_j}\right)\right),\] where $r\in \Z_{\geq 0}$ is the rank, $\mu_i$ are positive integers and $f_j(T)$ are irreducible distinguished polynomials. The $\mu$ and $\lambda$-invariants are defined as follows
\[\mu(M):=\sum_{i=1}^s \mu_i, \text{ and }\lambda(M):=\sum_{j=1}^t \lambda_j\op{deg}f_j,\]where we set $\mu(M)=0$ (resp. $\lambda(M)=0$) if $s=0$ (resp. $t=0$).
Denote the $\mu$ (resp. $\lambda$-invariant) of $R(E/F_\infty)^\vee$ by $\mu(E/F_\infty)$ (resp. $\lambda(E/F_\infty)$). We note that when $F_\infty/F$ is the cyclotomic $\Z_p$-extension, it is conjectured that $R(E/F_\infty)^\vee$ is a torsion $\Lambda$-module and $\mu(E/F_\infty)=0$, cf. \cite[Conjecture A]{coates2005fine}.

\par We are interested in the following question.
\begin{question}\label{mu=0 conjecture}
 Under what conditions can it be shown that the dual fine Selmer group $R(E/F_\infty)^\vee$ is a finitely generated torsion $\Lambda$-module with $\mu(E/F_\infty)=0$? Furthermore, assuming that these conditions are satisfied, how well can one characterize the $\lambda$-invariant $\lambda(E/F_\infty)$?
\end{question}

That $R(E/F_\infty)^\vee$ is a finitely generated as a $\Lambda$-module is well known, and follows from a standard argument that involves an application of Nakayama's lemma. 
\begin{lem}\label{torsion mu=0 lemma}
With respect to notation above, the following conditions are equivalent. 
\begin{enumerate}
    \item $R(E/F_\infty)^\vee$ is a torsion $\Lambda$-module with $\mu(E/F_\infty)=0$, 
    \item $R(E/F_\infty)[p]$ is finite.
\end{enumerate}  
Furthermore, if the above conditions are satisfied, then,
\[\lambda(E/F_\infty)\leq \op{dim}_{\Z/p\Z} \left(R(E/F_\infty)[p]\right).\]
\end{lem}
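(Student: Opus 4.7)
The plan is to translate the lemma via Pontryagin duality into a purely algebraic statement about the finitely generated $\Lambda$-module $M := R(E/F_\infty)^\vee$. Since $R(E/F_\infty)$ is a discrete $p$-primary $\Lambda$-module, dualizing the multiplication-by-$p$ sequence identifies $R(E/F_\infty)[p]$ with $(M/pM)^\vee$; the two $\mathbb{F}_p$-vector spaces therefore share a common dimension, and one is finite exactly when the other is. The lemma thus reduces to showing (i) $M$ is $\Lambda$-torsion with $\mu(M)=0$ iff $M/pM$ is finite, and (ii) in that case $\lambda(M)\leq \dim_{\mathbb{F}_p} M/pM$.

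For (i), I would invoke the structure theorem to fix a pseudo-isomorphism $\varphi: M \to N$ with
$$N = \Lambda^r \oplus \bigoplus_i \Lambda/(p^{\mu_i}) \oplus \bigoplus_j \Lambda/(f_j^{\lambda_j}),$$
and split the four-term sequence $0 \to \ker\varphi \to M \to N \to \coker\varphi \to 0$ into two short exact sequences whose outer terms are finite. Applying $-\otimes_{\mathbb{Z}_p}\mathbb{F}_p$ and keeping track of the Tor tails (which are finite since $\ker\varphi$ and $\coker\varphi$ are finite $\mathbb{Z}_p$-modules), I would conclude that $\dim_{\mathbb{F}_p} M/pM$ and $\dim_{\mathbb{F}_p} N/pN$ differ by a finite quantity, so are simultaneously finite or infinite. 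A direct summand-by-summand computation then identifies $N/pN$: each copy of $\Lambda$ contributes the infinite space $\mathbb{F}_p\llbracket T\rrbracket$, each $\Lambda/(p^{\mu_i})$-summand again contributes $\mathbb{F}_p\llbracket T\rrbracket$, while each $\Lambda/(f_j^{\lambda_j})$-summand contributes $\mathbb{F}_p\llbracket T\rrbracket/(T^{\lambda_j \deg f_j})$ (using that distinguished $f_j$ reduces to $T^{\deg f_j}$ modulo $p$). Hence $M/pM$ is finite precisely when $r=0$ and there are no $\mu$-summands, which is the desired equivalence.

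For (ii), the preceding analysis shows that under conditions (1)--(2) one has $N = \bigoplus_j \Lambda/(f_j^{\lambda_j})$, which is free as a $\mathbb{Z}_p$-module of rank $\sum_j \lambda_j \deg f_j = \lambda(M)$. The pseudo-isomorphism therefore forces $M$ itself to be finitely generated over $\mathbb{Z}_p$ of $\mathbb{Z}_p$-rank $\lambda(M)$. Decomposing $M \cong \mathbb{Z}_p^{\lambda(M)} \oplus T_0$ with $T_0$ a finite $p$-group gives
$$\dim_{\mathbb{F}_p} M/pM \;=\; \lambda(M) + \dim_{\mathbb{F}_p} T_0/pT_0 \;\geq\; \lambda(M),$$
and the bound follows from the first-step identification $\dim_{\mathbb{F}_p} R(E/F_\infty)[p] = \dim_{\mathbb{F}_p} M/pM$.

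The only real subtlety I anticipate is the bookkeeping around the pseudo-isomorphism: the map $M \to N$ is generally neither injective nor surjective, so one cannot identify $M/pM$ with $N/pN$ outright, and care is needed to argue that their $\mathbb{F}_p$-dimensions agree up to a finite (and ultimately harmless) discrepancy. Everything else is a mechanical application of the structure theorem together with the reduction of a distinguished polynomial modulo $p$.
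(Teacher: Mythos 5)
Your proposal is correct and follows essentially the same route as the paper: identify $R(E/F_\infty)[p]^\vee$ with $M/pM$ via duality, characterize ``torsion with $\mu=0$'' in terms of finiteness of $M/pM$ via the structure theorem, and then compare $\lambda(M)=\op{rank}_{\Z_p}M$ with $\op{dim}_{\F_p}M/pM$. The paper simply asserts the structure-theory equivalence (``$M$ is $\Lambda$-torsion with $\mu=0$ iff $M$ is finitely generated over $\Z_p$'') as an ``easy consequence,'' whereas you unpack it by tracking the pseudo-isomorphism summand by summand and bounding the finite Tor discrepancies; the content is the same.
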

\begin{proof}
Let $M$ be a finitely generated $\Lambda$-module. It is an easy consequence of the structure theory that $M$ is a torsion $\Lambda$-module with $\mu(M)=0$ if and only if $M$ is finitely generated as a $\Z_p$-module. Let $M$ be the dual fine Selmer group $R(E/F_\infty)^\vee$, we find therefore that $M$ is a torsion $\Lambda$-module with $\mu(M)=0$ if and only if $M/pM$ is finite. Note that the dual of $R(E/F_\infty)[p]$ is $M/pM$, and hence the two conditions above are equivalent. Furthermore, $\lambda(M)$ is equal to the $\Z_p$-rank of $M$, and hence, $\lambda(M)\leq \op{dim}_{\Z/ p\Z} M/pM$. Therefore, we find that $\lambda(E/F_\infty)\leq \op{dim}_{\Z / p\Z} \left(R(E/F_\infty)[p]\right)$.
\end{proof}

Given a $\Lambda$-module $M$, we say that $M$ is cofinitely generated (resp. cotorsion) as a $\Lambda$-module if the Pontryagin dual $M^\vee:=\op{Hom}\left(M, \Q_p/\Z_p\right)$ is finitely generated (resp. torsion) as a $\Lambda$-module.
\section{The residual fine Selmer group}
\par Recall that $F$ is a number field and $E$ is an elliptic curve defined over $F$, and $p$ is an odd prime. Denote by $E[p]$ the $p$-torsion subgroup of $E(\bar{F})$ and note that as an abelian group, $E[p]$ is isomorphic to $\left(\Z/ p \Z\right)^2$. We define an analogue of the fine Selmer group associated with the residual representation on $E[p]$.

\par Recall that $S$ consists of the set of primes $v\nmid p$ at which $E$ has bad reduction and the primes $v\mid p$. For $v\in S$, set $\cH_v(F_\infty, E[p])$ to be the (possibly infinite) product $\prod_{w\in v(F_\infty)} H^1(F_{\infty, w}, E[p])$. The \emph{residual fine Selmer group} is defined as follows
\[R(E[p]/F_\infty):=\op{ker}\left\{H^1(F_S/F_\infty, E[p])\xrightarrow{\Phi} \bigoplus_{v\in S} \cH_v\left(F_\infty,E[p]\right)\right\}.\] The map $\Phi$ is the natural map obtained by restriction to $F_{\infty, w}$ as $w$ ranges over the primes in $S(F_\infty)$. Consider the Kummer sequence 
\[0\rightarrow E[p]\rightarrow E[p^\infty]\rightarrow E[p^\infty]\rightarrow 0.\]Given a prime $v\in S$, let $h_v$ be the map 
\[h_v: \cH_v(F_\infty, E[p])\rightarrow \cH_v(F_\infty, E[p^\infty])[p]\] which is the product of the natural maps induced from the Kummer sequence
\[h_w: H^1(F_{\infty, w}, E[p])\rightarrow H^1(F_{\infty, w}, E[p^\infty])[p],\] as $w$ ranges over $v(F_\infty)$.
Let $h$ be the direct sum of the maps $h_v$, as $v$ ranges over $S$
\[h:\bigoplus_{v\in S}\cH_v(F_\infty, E[p]) \longrightarrow \bigoplus_{v\in S}\cH_v(F_\infty, E[p^\infty])[p].\]
From the Kummer sequence, we have the following diagram
\begin{equation}\label{fdiagram}
\begin{tikzcd}[column sep = small, row sep = large]
0\arrow{r} & R(E[p]/F_\infty) \arrow{r}\arrow{d}{\Psi} & H^1(F_S/F_\infty, E[p])\arrow{r} \arrow{d}{g} & \operatorname{im}(\Phi)\arrow{r} \arrow{d}{h'} & 0\\
0\arrow{r} & R(E/F_\infty)[p] \arrow{r} & H^1(F_S/F_\infty, E[p^{\infty}])[p] \arrow{r}  &\bigoplus_{v\in S} \mathcal{H}_v(F_\infty, E[p^{\infty}])[p].
\end{tikzcd}
\end{equation}
In the above diagram, the map $g$ is surjective with kernel isomorphic to $\frac{H^0(F_\infty, E[p^\infty])}{p H^0(F_\infty, E[p^\infty])}$. The vertical map $h'$ is the restriction of the map $h$ to $\op{im}(\Phi)$. By an application of the Snake lemma, we have the following exact sequence
\begin{equation}\label{exact}0\rightarrow \op{ker} \Psi\rightarrow \op{ker} g\rightarrow \op{ker} h'\rightarrow \op{cok} \Psi\rightarrow 0. \end{equation}
Note that for $v\in S$, the kernel of $h_v$ is the product of kernels of $h_w$ as $w$ ranges over the primes of $F_\infty$ that lie above $v$. 
\begin{prop}\label{prop 3.1}
With respect to notation above, assume that the following conditions are satisfied
\begin{enumerate}
    \item $\op{ker}(h)$ is finite, 
    \item $R(E[p]/F_\infty)$ is finite.
\end{enumerate}Then, the fine Selmer group $R(E/F_\infty)$ is a cotorsion $\Lambda$-module with $\mu(E/F_\infty)=0$. Furthermore, we have that 
\[\lambda(E/F_\infty)\leq \op{dim}_{\Z/p\Z}\left(R(E[p]/F_\infty)\right)+\op{dim}_{\Z/p\Z}\left(\op{ker}(h)\right).\]
\end{prop}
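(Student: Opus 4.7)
The plan is to combine the snake-lemma sequence (\ref{exact}) with Lemma \ref{torsion mu=0 lemma}. The latter equates the cotorsion/$\mu=0$ statement with the finiteness of $R(E/F_\infty)[p]$ and bounds $\lambda(E/F_\infty)$ by $\op{dim}_{\Z/p\Z}R(E/F_\infty)[p]$, so the whole proposition reduces to exhibiting $R(E/F_\infty)[p]$ as an extension whose outer terms are controlled respectively by $R(E[p]/F_\infty)$ and $\op{ker}(h)$.

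First, I would read off from the tail $\op{ker} h' \to \op{cok}\Psi \to 0$ of (\ref{exact}) that $\op{cok}\Psi$ is a quotient of $\op{ker} h'$. Since $h'$ is, by construction, the restriction of $h$ to $\op{im}(\Phi)$, one has $\op{ker} h' \subseteq \op{ker}(h)$, and hypothesis (1) then forces $\op{cok}\Psi$ to be finite with
\[\op{dim}_{\Z/p\Z}\op{cok}\Psi \leq \op{dim}_{\Z/p\Z}\op{ker}(h).\]
Second, since $\Psi$ has domain $R(E[p]/F_\infty)$, which is finite by hypothesis (2), its image $\op{im}\Psi$ sitting inside $R(E/F_\infty)[p]$ is finite with $\op{dim}_{\Z/p\Z}\op{im}\Psi \leq \op{dim}_{\Z/p\Z}R(E[p]/F_\infty)$.

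Combining these two facts, the short exact sequence
\[0 \to \op{im}\Psi \to R(E/F_\infty)[p] \to \op{cok}\Psi \to 0\]
yields finiteness of $R(E/F_\infty)[p]$ together with the estimate
\[\op{dim}_{\Z/p\Z} R(E/F_\infty)[p] \leq \op{dim}_{\Z/p\Z} R(E[p]/F_\infty) + \op{dim}_{\Z/p\Z}\op{ker}(h).\]
Feeding this into Lemma \ref{torsion mu=0 lemma} then simultaneously delivers the cotorsion statement with $\mu(E/F_\infty)=0$ and the asserted $\lambda$-bound. The argument is essentially a bookkeeping exercise whose only nontrivial point is the containment $\op{ker} h' \subseteq \op{ker}(h)$, which is immediate from the definition of $h'$; the real work has already been carried out in constructing diagram (\ref{fdiagram}) and in establishing Lemma \ref{torsion mu=0 lemma}, so no serious obstacle remains.
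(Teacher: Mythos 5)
Your proposal is correct and follows essentially the same route as the paper: reduce to finiteness of $R(E/F_\infty)[p]$ via Lemma \ref{torsion mu=0 lemma}, then use the snake-lemma sequence \eqref{exact} to bound $\op{cok}\Psi$ by $\op{ker}(h)$ and the finiteness of $R(E[p]/F_\infty)$ to bound $\op{im}\Psi$. The only cosmetic difference is that the paper establishes finiteness of $\op{ker}\Psi$ via $\op{ker}(g)$ before concluding, whereas you go directly from the domain $R(E[p]/F_\infty)$ being finite; the final chain of inequalities is identical.
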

\begin{proof}
First, we show that $R(E/F_\infty)$ is a cotorsion $\Lambda$-module and $\mu(E/F_\infty)=0$. According to Lemma \ref{torsion mu=0 lemma}, $R(E/F_\infty)$ is a cotorsion $\Lambda$-module with $\mu=0$ if and only if $R(E/F_\infty)[p]$ is finite. Consider the map 
\[\Psi: R(E[p]/F_\infty)\rightarrow R(E/F_\infty)[p],\] (cf. \eqref{fdiagram}) and we observe that it suffices to show that $\op{ker}\Psi$ and $\op{cok}\Psi$ are finite. Referring to the exact sequence \eqref{exact}, we deduce that $\op{ker}\Psi$ is finite since $\op{ker}(g)$ is finite. Since it is assumed that $\op{ker}(h)$ is finite, it follows that $\op{cok}\Psi$ is finite as well. Therefore, we have shown that $R(E/F_\infty)[p]$ is finite, and therefore deduce that $R(E/F_\infty)$ is a cotorsion $\Lambda$-module and $\mu(E/F_\infty)=0$.
\par Note that it also follows from Lemma \ref{torsion mu=0 lemma} that $\lambda(E/F_\infty)\leq \op{dim}_{\Z/p\Z} \left(R(E/F_\infty)[p]\right)$. Hence, we arrive at the bounds
\[\begin{split}\lambda(E/F_\infty) &\leq \op{dim}_{\Z/p\Z} \left(R(E/F_\infty)[p]\right) \\ &\leq \op{dim}_{\Z/p\Z} \left(R(E[p]/F_\infty)\right) +\op{dim}_{\Z/p\Z}\left(\op{cok}\Psi\right) \\ &\leq \op{dim}_{\Z/p\Z} \left(R(E[p]/F_\infty)\right) +\op{dim}_{\Z/p\Z}\left(\op{ker}(h)\right),\end{split}\]
and thus, this completes the proof.
\end{proof}

\begin{defn}Let $S_0$ be the set of primes $v\in S$ at which \emph{both} of the following conditions are satisfied
\begin{enumerate}
\item $E$ has bad reduction at $v$.
\item If $p\geq 5$ and $\mu_p$ is contained in $F_v$, then, $E$ has split multiplicative reduction at $v$.
\end{enumerate}
\end{defn}

\begin{ass}\label{main assumption}
Assume that each prime $v\in S_0\cup S_p$ is finitely decomposed in $F_\infty$.
\end{ass}
Since all primes of $F$ are finitely decomposed in its cyclotomic $\Z_p$-extension, the above assumption is always satisfies when $F_\infty$ is the cyclotomic $\Z_p$-extension of $F$.
Note that since $E$ is stipulated to have good reduction at all primes $v\in S$, we find that $S_0$ is a subset of $S\backslash S_p$. Given $v\in S_p$, set $\delta_v:=\begin{cases} & 2\text{ if }E(F_v)[p]\neq 0;\\
& 0\text{ if }E(F_v)[p]= 0.
\end{cases}$ For any prime $v\in S\cup S_p$, let $g_v$ be the number of primes $w\in v(F_\infty)$.

\begin{lem}\label{lemma 3.4}
It follows from Assumption \ref{main assumption} that $\op{ker}(h)$ is finite, and moreover, \[\op{dim}\left(\op{ker}(h)\right)\leq \sum_{v\in S_0} 2g_v+\sum_{v\in S_p}\delta_v g_v.\]
\end{lem}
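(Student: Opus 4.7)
The plan is to reduce to a local calculation at each prime $w\in v(F_\infty)$ above each $v\in S$, bound $\dim_{\F_p}\ker(h_w)$ there, and then sum the bounds according to the partition $S=S_p\sqcup S_0\sqcup(S\setminus(S_0\cup S_p))$. From the long exact sequence attached to $0\to E[p]\to E[p^\infty]\xrightarrow{p}E[p^\infty]\to 0$ over $F_{\infty,w}$ one reads off
\[
0\to E(F_{\infty,w})[p^\infty]/p E(F_{\infty,w})[p^\infty] \to H^1(F_{\infty,w},E[p]) \xrightarrow{h_w} H^1(F_{\infty,w},E[p^\infty])[p] \to 0,
\]
identifying $\ker(h_w)$ with $E(F_{\infty,w})[p^\infty]/pE(F_{\infty,w})[p^\infty]$. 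Since $E(F_{\infty,w})[p^\infty]$ embeds in $(\Q_p/\Z_p)^2$, it has at most two cyclic summands, giving the crude bound $\dim_{\F_p}\ker(h_w)\le 2$; moreover $E(F_{\infty,w})[p]=0$ forces $\ker(h_w)=0$.

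For $v\in S_p$, Assumption \ref{main assumption} gives exactly $g_v$ primes above $v$; when $\delta_v=2$ the crude bound yields $\dim\ker(h_v)\le 2g_v=\delta_v g_v$. The key case is $\delta_v=0$, i.e.\ $E(F_v)[p]=0$: since $v$ is finitely decomposed in the $\Z_p$-extension $F_\infty$, the decomposition group $\op{Gal}(F_{\infty,w}/F_v)$ is an open (and therefore pro-$p$) subgroup of $\Gamma\cong\Z_p$. The standard fixed-point theorem for pro-$p$ groups acting on nonzero finite $p$-groups then propagates the hypothesis $E(F_v)[p]=E(F_{\infty,w})[p]^{\op{Gal}(F_{\infty,w}/F_v)}=0$ to $E(F_{\infty,w})[p]=0$, giving $\ker(h_w)=0$ and hence $\dim\ker(h_v)\le \delta_v g_v$.

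For $v\in S_0$, the same finite-decomposition hypothesis gives $g_v$ primes and the crude bound yields $\dim\ker(h_v)\le 2g_v$. What remains—and what I expect to be the main obstacle—is to show that every $v\in S\setminus(S_0\cup S_p)$ contributes zero. Unpacking the definition, such a $v$ satisfies $v\nmid p$, $E$ has bad reduction at $v$, $p\ge 5$, $\mu_p\subset F_v$, and the reduction is either additive or non-split multiplicative. Because $v$ is unramified in $F_\infty$, $F_{\infty,w}\subset F_v^{\ur}$ and hence $E(F_{\infty,w})[p]\subset E[p]^{I_v}$; the task is to make this vanish.

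For additive reduction, $p\ge 5$ ensures wild inertia acts trivially on $E[p]$ and the tame inertia image is generated by a nontrivial $\sigma\in\Aut(E[p])$ of order $e\in\{2,3,4,6\}$; the Weil pairing gives $\det\sigma=\chi_{\cyc}(\sigma)=1$, since $\chi_{\cyc}$ is unramified at $v\nmid p$. If $1$ were an eigenvalue of $\sigma$, then both eigenvalues would equal $1$, and a Jordan decomposition $\sigma=I+N$ with $N\ne 0$ nilpotent would give $\sigma^e=I+eN$, which cannot equal $I$ as $\gcd(e,p)=1$; hence $\sigma$ has no fixed vector and $E[p]^{I_v}=0$. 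For non-split multiplicative reduction, let $F_v'/F_v$ be the unramified quadratic extension over which $E$ becomes split multiplicative, and let $\eta$ be the nontrivial character of $\op{Gal}(F_v'/F_v)$; the Tate parametrization over $F_v'$ yields a $\op{G}_{F_v}$-equivariant filtration $0\to\mu_p\otimes\eta\to E[p]\to\Z/p\Z\otimes\eta\to 0$, and because $\mu_p\subset F_v$ each graded piece is isomorphic to $\eta$ as a $\op{G}_{F_v}$-module. Since $\op{Gal}(F_{\infty,w}/F_v)$ is pro-$p$ and $p$ is odd while $[F_v':F_v]=2$, we have $F_v'\not\subset F_{\infty,w}$, so $\eta|_{\op{G}_{F_{\infty,w}}}$ remains nontrivial; the $\op{G}_{F_{\infty,w}}$-invariants of each graded piece vanish, and the long exact sequence gives $E(F_{\infty,w})[p]=0$. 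Summing these local estimates over $v\in S$ produces the claimed inequality.
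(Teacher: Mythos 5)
Your overall strategy coincides with the paper's: identify $\op{ker}(h_w)\cong E(F_{\infty,w})[p^\infty]\otimes\Z/p\Z$, use the crude bound $2$ at each $w$ over $v\in S_0$, kill the contribution at $v\in S_p$ with $E(F_v)[p]=0$ by the pro-$p$ fixed-point theorem (the paper cites Neukirch--Schmidt--Wingberg Prop.~1.6.12 for exactly this), and kill the contribution at $v\in S\setminus(S_0\cup S_p)$. The difference is that for the last step the paper simply cites Hachimori--Matsuno, Prop.~5.1(iii), whereas you attempt a direct computation of $E[p]^{I_v}$. Your computation is correct and complete in the non-split multiplicative case, but it has a genuine gap in the additive case.

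The gap: ``additive reduction'' at $v\nmid p$ splits into \emph{potentially good} and \emph{potentially multiplicative} reduction, and your argument only treats the former. You assert that inertia acts on $E[p]$ through a cyclic group of order $e\in\{2,3,4,6\}$ coprime to $p$ and then deduce $E[p]^{I_v}=0$ from $\det=\chi_\cyc=1$ together with $\gcd(e,p)=1$. That order estimate is specific to potentially good reduction. If $E$ has additive potentially multiplicative reduction at $v$, then $E$ is the quadratic twist of a split multiplicative curve $E_0$ by a \emph{ramified} quadratic character $\psi$, so $E[p]\cong E_0[p]\otimes\psi$ and a tame inertia generator $\sigma$ acts on $E[p]$ as $-u$ with $u$ unipotent; if the Tate parameter is not a $p$-th power, $u\neq 1$ and $\sigma$ has order $2p$, so $\gcd(e,p)=p$. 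The conclusion $E[p]^{I_v}=0$ is still true here (the eigenvalues of $-u$ are both $-1$, so there is no fixed vector), but this is a different argument from the one you gave, and as written your proof silently excludes the potentially multiplicative subcase. A smaller inaccuracy in the same paragraph: the assertion that ``$p\geq 5$ ensures wild inertia acts trivially on $E[p]$'' conflates $p$ with the residue characteristic $\ell$ of $v$; triviality of wild inertia on $E[p]$ for potentially good reduction is governed by $\ell\nmid\lvert\Phi\rvert$ (automatic for $\ell\geq 5$, but not for $\ell\in\{2,3\}$). This particular slip is harmless for your bound since the inertia image is still cyclic of order dividing $24$, hence coprime to $p$, but the potentially multiplicative omission is a real gap and needs to be filled (or replaced by the citation to Hachimori--Matsuno as the paper does).
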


\begin{proof}
Let $v\in S$ and let $w$ be a prime of $F_\infty$ that lies above $v$. From the Kummer sequence, we find that the kernel of $h_w$ is identified with $E(F_{\infty,w})[p^\infty]\otimes \Z/p\Z$. Note that in particular, the dimension of $\op{ker}h_w$ is at most $2$. There are three cases to consider.
\begin{enumerate}
\item If $v\in S_0$, then, $\op{dim} \left(\op{ker}(h_w)\right)$ is at most $2$. Thus, $\op{dim}\left(\op{ker} (h_v)\right)$ is at most $2g_v$.
\item If $v\in S\backslash (S_0\cup S_p)$, then, $\mu_p$ is contained in $F_v$ and $E$ has either nonsplit multiplicative reduction or additive reduction at $v$. Since $v\nmid p$, it is unramified in any $\Z_p$-extension, in particular, unramified in $F_\infty$. Thus, the elliptic curve $E$ has either non-split multiplicative reduction or additive reduction at every prime $w$ of $F_\infty$ that lies above $v$. Then, it follows from \cite[Proposition 5.1. (iii)]{hachimori1999analogue} that $E(F_{\infty,w})[p^\infty]=0$ for all primes $w\in v(F_\infty)$. As a result, the kernel of $h_v$ is $0$ in this case.
\item Finally, consider the case when $v\in S_p$. There are two subcases to consider.
\begin{enumerate}
\item First, assume that $E(F_v)[p]=0$. Then, since $F_{\infty, w}/F_v$ is a pro-$p$ extension, it follows that $E(F_{\infty,w})[p^\infty]=0$ (cf. \cite[Proposition 1.6.12]{neukirch2013cohomology}). Therefore, the kernel of $h_v$ is $0$ in this case.
\item On the other hand, if $E(F_v)[p]\neq 0$, then, we do still have the bound $\op{dim}\left(\op{ker}h_w\right)\leq 2$ for each prime $w$ of $F_\infty$ that lies above $w$. 
\end{enumerate}
From the above case decomposition, we find that $\op{dim}h\leq \sum_{v\in S_0} 2g_v+\sum_{v\in S_p}\delta_v g_v$. Since $g_v$ is assumed to be finite for each $v\in S_0\cup S_p$, the sum is finite.
\end{enumerate}
\end{proof}

\section{Main results and their proofs}
\par In this section, we state and prove the main results of the paper. We illustrate the bounds obtained via a concrete example at the end of the section. 

\par Given an algebraic extension $\cF$, let $H(\cF)$ (resp. $A(\cF)$) be the maximal abelian unramified extension of $F$ such that $\op{Gal}(H(\cF)/\cF)$ (resp. $\op{Gal}(A(\cF)/\cF)$) is an elementary $p$-group (resp. pro-$p$ group). Let $\cF(E[p])$ be the extension of $\cF$ generated by $E[p]$. In other words, the field $\cF(E[p])$ is the field extension of $\cF$ which is fixed by the kernel of $\bar{\rho}:\op{G}_{\cF}\rightarrow \op{Aut}(E[p])$. Let $K_n$ be the extension $F_n(E[p])$ and set $L_n:=H(K_n)$. Set $K_\infty$ to denote $F_\infty(E[p])$ and set $L_\infty$ (resp. $\cL_\infty$) to denote the extension $H(K_\infty)$ (resp. $A(K_\infty)$). We let $X:=\op{Gal}(L_\infty/K_\infty)$ and $\cX:=\op{Gal}(\cL_\infty/K_\infty)$. We set $K$ to denote the extension $F(F[p])$, and note that $K_\infty$ is a $\Z_p$-extension of $K$. Let $\Lambda=\Z_p\llbracket \op{Gal}(K_\infty/K)\rrbracket$ denote the associated Iwasawa algebra, and note that $\cX$ is a module over $\Lambda$. It is in fact known that $\cX$ is finitely generated module over $\Lambda$ (cf. \cite[Chapter 13]{washington1997introduction}). By construction, $X=\cX/p\cX$ is a module over $\Omega:=\Lambda/p$. \begin{defn}\label{def of Y}Denote by $H^S(K_\infty)$ the maximal extension of $K_\infty$, which is contained in $H(K_\infty)$, in which the primes of $S(K_\infty)$ are completely split. Set $\cY$ to denote $\op{Gal}(H^S(K_\infty)/K_\infty)$, and set $Y:=\cY/p\cY$.
\end{defn}Set $G$ to denote the Galois group $\op{Gal}(K_\infty/F_\infty)$. From the inflation-restriction sequence, we have the following exact sequence

\[0\rightarrow H^1(G, E(K_\infty)[p])\xrightarrow{\op{inf}} H^1(F_S/F_\infty, E[p])\xrightarrow{\op{res}} H^1(K_S/K_\infty, E[p]).\]
Note that the action of $\op{Gal}(K_S/K_\infty)$ on $E[p]$ is the trivial action, and therefore, we find that \[H^1(K_S/K_\infty, E[p])=\op{Hom}\left(\op{Gal}(K_S/K_\infty),\Z/p\Z\right)^2.\]
The residual fine Selmer group $R(E[p]/F_\infty)$ is a subgroup of $H^1(F_S/F_\infty, E[p])$. Let $\mathcal{Z}$ denote $\op{inf}^{-1}(R(E[p]/F_\infty))$ and set $\mathcal{Z}_1:=\op{res}(Y)$. Note that $\mathcal{Z}_1$ can be identified with a subgroup of $\op{Hom}(Y, \Z/p\Z)^2$. In this way, we have an exact sequence
\begin{equation}\label{main exact sequence}0\rightarrow \mathcal{Z}\rightarrow R(E[p]/F_\infty)\rightarrow \op{Hom}(Y, \Z/p\Z)^2,\end{equation} where the image of the rightmost map is $\mathcal{Z}_1$. Since $G$ is finite, it is clear that $\mathcal{Z}$ is finite as well.
The following assumption is a special case of Iwasawa's classical $\mu=0$ conjecture. 

\begin{ass}\label{class tower ass}
Assume that $\cY$ is torsion $\Lambda$-module and that $\mu(\cY)=0$. Equivalently, assume that $\cY$ is finitely generated as a $\Z_p$-module.
\end{ass}

\begin{lem}\label{lemma 4.2}
With respect to notation above, the following conditions are equivalent.
\begin{enumerate}
\item Assumption \ref{class tower ass} is satisfied,
\item $Y$ is finite.
\end{enumerate}
Furthermore, if the above equivalent conditions are satisfied, then, $\lambda(\cY)\leq \op{rank}_{\Z_p} \cY$.
\end{lem}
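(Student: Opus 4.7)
The plan is to run essentially the same argument used in the proof of Lemma~\ref{torsion mu=0 lemma}, with $\cY$ playing the role of the dual fine Selmer group. Since $\cY$ is a quotient of $\cX$, and $\cX$ is known to be finitely generated over $\Lambda$ (as recalled in the paragraph preceding Definition~\ref{def of Y}), the structure theorem for finitely generated $\Lambda$-modules applies to $\cY$, and the proof reduces to a general fact about such modules.

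The core observation is: for any finitely generated $\Lambda$-module $M$, the three conditions ``$M$ is $\Lambda$-torsion with $\mu(M) = 0$'', ``$M$ is finitely generated over $\Z_p$'', and ``$M/pM$ is finite'' are equivalent. I would prove this by examining the elementary module $E = \Lambda^r \oplus \bigoplus_i \Lambda/(p^{\mu_i}) \oplus \bigoplus_j \Lambda/(f_j^{\lambda_j})$ pseudo-isomorphic to $M$: the $\Lambda^r$ and $\Lambda/(p^{\mu_i})$ summands are never finitely generated over $\Z_p$, while each $\Lambda/(f_j^{\lambda_j})$ is $\Z_p$-free of rank $\lambda_j \deg f_j$ by Weierstrass preparation, and a finite kernel/cokernel does not affect finite generation over $\Z_p$. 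Similarly, tensoring the pseudo-isomorphism $M \to E$ with $\F_p$ gives a map whose kernel and cokernel are controlled by $\op{Tor}_1^{\Z_p}(\cdot,\F_p)$ and mod-$p$ quotients of the (finite) original kernel and cokernel, so $M/pM$ is finite iff $E/pE$ is, and inspection of the summands shows this is equivalent to $r = 0$ and all $\mu_i = 0$. Applying this with $M = \cY$, and noting $Y = \cY/p\cY$ by Definition~\ref{def of Y}, yields the equivalence $(1)\Leftrightarrow(2)$.

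For the final inequality, under the equivalent conditions the elementary model reduces to $\bigoplus_j \Lambda/(f_j^{\lambda_j})$, and since a pseudo-isomorphism preserves the $\Z_p$-rank one computes
\[
\rank_{\Z_p} \cY \;=\; \sum_j \lambda_j \deg f_j \;=\; \lambda(\cY),
\]
which gives the stated bound (in fact with equality). There is no genuine obstacle here: the argument is a direct transcription of the proof of Lemma~\ref{torsion mu=0 lemma}, and the only point requiring minor care is the diagram chase ensuring that the finite kernel and cokernel of the pseudo-isomorphism remain finite after reduction modulo $p$.
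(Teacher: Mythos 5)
Your proof is correct and takes essentially the same route as the paper: both appeal to the structure theorem for finitely generated $\Lambda$-modules to show that the conditions ``torsion with $\mu = 0$,'' ``finitely generated over $\Z_p$,'' and ``mod-$p$ reduction finite'' all coincide, and then compute $\lambda(\cY) = \rank_{\Z_p}\cY$ from the elementary model. You spell out the bookkeeping (tracking kernels and cokernels of the pseudo-isomorphism after tensoring with $\F_p$) that the paper simply asserts as an ``easy consequence of the structure theory,'' but the underlying argument is the same.
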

\begin{proof}
It is an easy consequence of the structure theory of finitely generated Iwasawa modules that Assumption \ref{class tower ass} is satisfied if and only if $\cY$ is finitely generated as a $\Z_p$-module and that $\lambda(\cY)=\op{rank}_{\Z_p}\cY$. Note that \[\op{rank}_{\Z_p}\cY\leq \op{dim}_{\Z/p\Z}\left(\cY/p\cY\right)=\op{dim}_{\Z/p\Z}Y,\] and the result follows.
\end{proof}
In many situations, it is shown that certain Iwasawa modules that arise from class groups do not contain any non-zero finite $\Lambda$-submodules. This property, if it holds, implies that the dimension of $Y$ would equal the $\lambda$-invariant of $\cY$, as the following result shows.
\begin{prop}\label{prop 4.2}
With respect to notation above, suppose that Assumption \ref{class tower ass} holds. Then the following conditions are equivalent
\begin{enumerate}
\item\label{c1 prop 4.2} $\cY$ does not contain any non-zero finite $\Lambda$-submodules, 
\item\label{c2 prop 4.2} $\lambda(\cY)=\op{dim}_{\Z/p\Z} Y$.
\end{enumerate}
\end{prop}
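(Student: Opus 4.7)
The plan is to reduce the statement to the structure theorem for finitely generated $\Z_p$-modules applied to $\cY$. By Assumption \ref{class tower ass} combined with Lemma \ref{lemma 4.2}, the module $\cY$ is finitely generated over $\Z_p$, and one has $\lambda(\cY) = \op{rank}_{\Z_p}\cY$.

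First, I would invoke the structure theorem for finitely generated modules over the principal ideal domain $\Z_p$ to decompose $\cY \cong \Z_p^{r} \oplus T$ as $\Z_p$-modules, where $r = \op{rank}_{\Z_p}\cY = \lambda(\cY)$ and $T$ is a finite abelian $p$-group, namely the $\Z_p$-torsion submodule of $\cY$. Reducing this decomposition modulo $p$ yields
\[\op{dim}_{\Z/p\Z} Y \;=\; r + \op{dim}_{\Z/p\Z}(T/pT),\]
and since $T$ is finite of $p$-power order, Nakayama's lemma gives $T/pT = 0$ if and only if $T = 0$. Consequently, condition \ref{c2 prop 4.2} holds if and only if $T = 0$.

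Second, I would identify $T$ with the maximal finite $\Lambda$-submodule of $\cY$. Since $\Lambda$ acts $\Z_p$-linearly on $\cY$, the $\Z_p$-torsion submodule is stable under $\Lambda$, so $T$ is itself a finite $\Lambda$-submodule. Conversely, any finite $\Lambda$-submodule of $\cY$ consists of elements of finite additive order and is therefore contained in $T$. Thus condition \ref{c1 prop 4.2} is equivalent to $T = 0$, which matches the characterization of \ref{c2 prop 4.2} obtained in the previous step.

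The argument is elementary once Lemma \ref{lemma 4.2} is granted; the only conceptual point is the identification of the $\Z_p$-torsion submodule of $\cY$ with its maximal finite $\Lambda$-submodule, and this is immediate from the fact that $\cY$ is finitely generated as a $\Z_p$-module under Assumption \ref{class tower ass}. No serious obstacle is anticipated.
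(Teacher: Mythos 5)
Your proof is correct and follows essentially the same route as the paper: decompose $\cY$ as a $\Z_p$-module into a free part of rank $\lambda(\cY)$ plus the finite $\Z_p$-torsion part $T$, observe that $T$ is $\Lambda$-stable and is the maximal finite $\Lambda$-submodule, and compare $\dim_{\Z/p\Z} Y$ with $\lambda(\cY)$. Your version is in fact slightly more careful than the paper's at one point, since you correctly write $\op{dim}_{\Z/p\Z}(T/pT)$ and invoke Nakayama to conclude $T/pT=0$ iff $T=0$, whereas the paper abbreviates this to $\op{dim}_{\Z/p\Z}(A)$.
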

\begin{proof}
It follows from Lemma \ref{lemma 4.2} that $\op{rank}_{\Z_p}\cY$ is finite and 
\[\cY=\Z_p^{\lambda(\cY)}\oplus A,\] where $A$ is a finite abelian $p$-group. Note that $A$ is the $p$-primary torsion subgroup of $\cY$, hence, $A$ is a finite $\Lambda$-submodule of $\cY$. Therefore, condition \eqref{c1 prop 4.2} is equivalent to the vanishing of $A$. On the other hand, $\op{dim}_{\Z/p\Z}(Y)=\lambda(\cY)+\op{dim}_{\Z/p\Z}(A)$. Therefore, \eqref{c1 prop 4.2} and \eqref{c2 prop 4.2} are equivalent.
\end{proof}
\begin{prop}\label{prop 4.4}
    With respect to notation above, suppose that Assumption \ref{class tower ass} holds.
    Then, $R(E[p]/F_\infty)$ is finite and
    \[\op{dim} R(E[p]/F_\infty)\leq 2\op{dim}(Y)+\op{dim}(\mathcal{Z}).\]
\end{prop}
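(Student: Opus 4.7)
The plan is to read the desired bound directly off the exact sequence \eqref{main exact sequence}, once finiteness of the two outer terms has been established. The only real work is therefore bookkeeping: checking that each side of \eqref{main exact sequence} is a finite $\mathbb{Z}/p\mathbb{Z}$-vector space whose dimension is controlled by $\op{dim}(Y)$ or $\op{dim}(\mathcal{Z})$.

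First I would invoke Lemma \ref{lemma 4.2}: Assumption \ref{class tower ass} is equivalent to the finiteness of $Y$, so $Y$ is a finite-dimensional $\mathbb{Z}/p\mathbb{Z}$-vector space. Consequently $\op{Hom}(Y,\mathbb{Z}/p\mathbb{Z})^{2}$ is also finite, and its $\mathbb{Z}/p\mathbb{Z}$-dimension equals $2\op{dim}(Y)$. Second, I would recall the observation made just after \eqref{main exact sequence} that $\mathcal{Z}$ is finite: by construction $\mathcal{Z}\subseteq H^{1}(G,E(K_\infty)[p])$, and this cohomology group is finite because $G=\op{Gal}(K_\infty/F_\infty)$ is a finite group and $E(K_\infty)[p]$ is a finite module.

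With both outer terms shown to be finite $\mathbb{F}_p$-vector spaces, the left-exact sequence \eqref{main exact sequence}
\[0\to \mathcal{Z}\to R(E[p]/F_\infty)\to \op{Hom}(Y,\mathbb{Z}/p\mathbb{Z})^{2}\]
exhibits $R(E[p]/F_\infty)$ as an extension of a subgroup of $\op{Hom}(Y,\mathbb{Z}/p\mathbb{Z})^{2}$ by $\mathcal{Z}$. Hence $R(E[p]/F_\infty)$ is itself finite, and taking $\mathbb{Z}/p\mathbb{Z}$-dimensions yields
\[\op{dim}\, R(E[p]/F_\infty)\ \leq\ \op{dim}(\mathcal{Z})+\op{dim}\,\op{Hom}(Y,\mathbb{Z}/p\mathbb{Z})^{2}\ =\ \op{dim}(\mathcal{Z})+2\op{dim}(Y),\]
which is the claimed bound. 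There is no real obstacle here; the substantive input was the construction of the exact sequence \eqref{main exact sequence} via inflation--restriction and the identification of $\mathcal{Z}_1$ with a subgroup of $\op{Hom}(Y,\mathbb{Z}/p\mathbb{Z})^{2}$, both of which are already in place in the text preceding the statement.
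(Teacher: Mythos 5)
Your proposal is correct and follows exactly the same route as the paper's proof: invoke Lemma \ref{lemma 4.2} to get finiteness of $Y$, note finiteness of $\mathcal{Z}$ from the discussion preceding \eqref{main exact sequence}, and read the dimension bound off the left-exact sequence \eqref{main exact sequence}. The paper's proof is just a terser version of the same argument.
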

\begin{proof}
It follows from Lemma \ref{lemma 4.2} that $Y$ is finite. The result follows from the exact sequence \eqref{main exact sequence}. 
\end{proof}
\begin{theorem}\label{th 4.5}
Let $E$ be an elliptic curve over a number field $F$ and let $p$ be an odd prime such that $E$ has good reduction at all primes that lie above $p$. Let $F_\infty/F$ be a $\Z_p$-extension of $F$. Suppose that Assumptions \ref{main assumption} and \ref{class tower ass} hold. Let $Y$ be as in Definition \ref{def of Y}. Then, the following assertions hold
\begin{enumerate}
    \item\label{p1 of th 4.5} $R(E/F_\infty)$ is a cotorsion module over $\Lambda$ with $\mu(E/F_\infty)=0$,
    \item\label{p2 of th 4.5} $Y$ is finite and the following bound on the $\lambda$-invariant holds
    \[\lambda(E/F_\infty)\leq 2\op{dim}(Y)+\op{dim}(\mathcal{Z})+\sum_{v\in S_0} 2g_v+\sum_{v\in S_p} \delta_v g_v.\]
\end{enumerate}

\end{theorem}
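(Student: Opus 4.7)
The plan is a direct assembly. The two hypotheses of the theorem are designed precisely to trigger Lemma \ref{lemma 3.4} and Proposition \ref{prop 4.4}; together these verify both hypotheses of Proposition \ref{prop 3.1}, and the $\lambda$-bound is then obtained by substituting the two resulting dimension estimates into the inequality supplied by that proposition.

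Concretely, I would first feed Assumption \ref{main assumption} into Lemma \ref{lemma 3.4} to obtain that $\op{ker}(h)$ is finite with
\[\op{dim}_{\Z/p\Z} \op{ker}(h) \leq \sum_{v \in S_0} 2 g_v + \sum_{v \in S_p} \delta_v g_v.\]
Next, by Lemma \ref{lemma 4.2} Assumption \ref{class tower ass} gives that $Y$ is finite, which supplies the first half of conclusion \eqref{p2 of th 4.5}; Proposition \ref{prop 4.4} then yields
\[\op{dim}_{\Z/p\Z} R(E[p]/F_\infty) \leq 2\op{dim}_{\Z/p\Z}(Y) + \op{dim}_{\Z/p\Z}(\mathcal{Z}).\]
With both finiteness conditions in hand, Proposition \ref{prop 3.1} immediately delivers conclusion \eqref{p1 of th 4.5} --- that $R(E/F_\infty)$ is a cotorsion $\Lambda$-module with $\mu(E/F_\infty)=0$ --- together with the inequality $\lambda(E/F_\infty) \leq \op{dim}_{\Z/p\Z} R(E[p]/F_\infty) + \op{dim}_{\Z/p\Z} \op{ker}(h)$. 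Summing the two dimension bounds above gives the asserted estimate in \eqref{p2 of th 4.5}.

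Since the substantive work has already been done in Sections 2 and 3 --- the Snake-lemma chase through \eqref{fdiagram} comparing $R(E[p]/F_\infty)$ with $R(E/F_\infty)[p]$, the local case analysis of $\op{ker}(h_w)$ by reduction type at each prime in $S$, and the inflation--restriction reduction of the residual fine Selmer group to the class-theoretic module $\cY$ via the exact sequence \eqref{main exact sequence} --- I do not anticipate any real obstacle in the proof of Theorem \ref{th 4.5}; the only remaining task is careful bookkeeping of the dimension inequalities. The proof will thus be the short composition of Proposition \ref{prop 3.1}, Lemma \ref{lemma 3.4} and Proposition \ref{prop 4.4}.
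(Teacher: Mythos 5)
Your proposal is correct and follows exactly the same route as the paper's proof: apply Lemma \ref{lemma 3.4} (via Assumption \ref{main assumption}) to bound $\op{dim}\op{ker}(h)$, apply Proposition \ref{prop 4.4} (via Assumption \ref{class tower ass} and Lemma \ref{lemma 4.2}) to bound $\op{dim}R(E[p]/F_\infty)$, and then feed both finiteness statements into Proposition \ref{prop 3.1} to obtain cotorsionness, vanishing of $\mu$, and the summed $\lambda$-bound. There is nothing to add.
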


\begin{proof}
Recall that Proposition \ref{prop 3.1} asserts that if
\begin{enumerate}
    \item $\op{ker}(h)$ is finite, 
    \item $R(E[p]/F_\infty)$ is finite,
\end{enumerate}then, $R(E/F_\infty)$ is a cotorsion $\Lambda$-module with $\mu(E/F_\infty)=0$. Furthermore, we have that 
\[\lambda(E/F_\infty)\leq \op{dim}R(E[p]/F_\infty)+\op{dim}(\op{ker}(h)).\]
Lemma \ref{lemma 3.4} states that $\op{ker}(h)$ is finite and $\op{dim}(\op{ker}(h))$ is bounded above by $\sum_{v\in S_0} 2g_v+\sum_{v\in S_p} \delta_v g_v$. The result follows therefore from Proposition \ref{prop 4.4}.
\end{proof}



We now show that $Y$ and $\mathcal{Z}$ can be shown to be trivial provided further conditions are satisfied. Let $\bar{\rho}:\op{G}_F\rightarrow \op{GL}_2(\Z/p\Z)$ be the \emph{residual representation} on $E[p]$.

\begin{lem}
Suppose that the image of $\bar{\rho}$ is a non-solvable subgroup of $\op{GL}_2(\Z/p\Z)$. Then, $\bar{\rho}_{|\op{G}_{F_\infty}}$ is irreducible.
\end{lem}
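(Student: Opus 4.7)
The plan is to argue by contradiction. Suppose $\bar{\rho}_{|\op{G}_{F_\infty}}$ is reducible. Then there is a one-dimensional $\F_p$-subspace $L \subseteq E[p]$ stable under $\bar{\rho}(\op{G}_{F_\infty})$. After a choice of basis making $L$ the standard coordinate line, this means $\bar{\rho}(\op{G}_{F_\infty})$ is contained in the Borel subgroup $B \subseteq \op{GL}_2(\Z/p\Z)$ of upper triangular matrices. Since $B$ is an extension of the diagonal torus by a unipotent subgroup, it is solvable, and therefore $\bar{\rho}(\op{G}_{F_\infty})$ is solvable.

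The remaining step is to derive solvability of the full image $\bar{\rho}(\op{G}_F)$, contradicting the hypothesis. Because $F_\infty/F$ is Galois, the subgroup $\op{G}_{F_\infty}$ is normal in $\op{G}_F$, hence $\bar{\rho}(\op{G}_{F_\infty})$ is a normal subgroup of $\bar{\rho}(\op{G}_F)$. The quotient $\bar{\rho}(\op{G}_F)/\bar{\rho}(\op{G}_{F_\infty})$ is a quotient of $\op{G}_F/\op{G}_{F_\infty} = \op{Gal}(F_\infty/F) \cong \Z_p$, and is therefore abelian. Combining this with the solvability of $\bar{\rho}(\op{G}_{F_\infty})$ established in the first paragraph shows that $\bar{\rho}(\op{G}_F)$ sits in a short exact sequence with solvable kernel and abelian quotient, so is itself solvable. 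This contradicts the assumption that the image of $\bar{\rho}$ is non-solvable, completing the proof.

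There is no real obstacle here: the only subtlety is the passage from "$\bar{\rho}|_{\op{G}_{F_\infty}}$ stabilizes a line" to "$\bar{\rho}(\op{G}_{F_\infty})$ lies in a Borel," which is immediate once one fixes the basis adapted to $L$, together with the observation that normality of $\op{G}_{F_\infty}$ in $\op{G}_F$ is what allows one to form the quotient group and invoke the two-out-of-three property for solvability.
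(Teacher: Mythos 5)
Your proof is correct and takes essentially the same approach as the paper's: reducibility forces the image of $\bar{\rho}|_{\op{G}_{F_\infty}}$ into a Borel (hence solvable), and since $F_\infty/F$ is abelian the full image $\bar{\rho}(\op{G}_F)$ is an extension of an abelian group by a solvable one, hence solvable, contradicting the hypothesis. The paper states this in two sentences; you have merely spelled out the details (normality of $\op{G}_{F_\infty}$, the two-out-of-three property for solvability).
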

\begin{proof}
Note that if $\bar{\rho}_{|\op{G}_{F_\infty}}$ is reducible, then, its image is solvable. Since $F_\infty$ is an abelian extension of $F$, we find that the image of $\bar{\rho}$ must also be solvable.
\end{proof}

\begin{lem}\label{lemma 4.7} The following assertions are satisfied.
\begin{enumerate}
\item\label{p1 of lemma 4.7} Suppose that \emph{any one} (or both) of the following conditions for $\bar{\rho}$ is satisfied
\begin{enumerate}
    \item $\bar{\rho}$ is irreducible when restricted to $\op{G}_{F_\infty}$,
    \item the image of $\bar{\rho}$ has cardinality coprime to $p$.
\end{enumerate} Then, we have that $\mathcal{Z}=0$.

\item\label{p2 of lemma 4.7} Suppose that both of the following conditions are satisifed
\begin{enumerate}
    \item there is only one prime $v$ of $K$ that lies above $p$ which ramifies in $K_\infty$ and $v$ is totally ramified in $K_\infty$,
    \item $p$ does not divide the class number of $K$.
\end{enumerate}Then, it follows that $Y=0$.
\end{enumerate}

\end{lem}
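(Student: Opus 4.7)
The plan is to treat each part by reducing it to a vanishing statement. For part \eqref{p1 of lemma 4.7}, observe that $K_\infty = F_\infty(E[p])$ already contains all of $E[p]$, so $E(K_\infty)[p] = E[p]$; combining this with the injectivity of $\op{inf}$ and the definition $\mathcal{Z} = \op{inf}^{-1}(R(E[p]/F_\infty))$, we obtain an inclusion $\mathcal{Z} \hookrightarrow H^1(G, E[p])$, where $G := \op{Gal}(K_\infty/F_\infty)$ acts on $E[p]$ via $\bar{\rho}|_{\op{G}_{F_\infty}}$. It therefore suffices to show $H^1(G, E[p]) = 0$ under either of the stated hypotheses.

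Under the coprimality hypothesis, $H^1(G, E[p])$ is annihilated simultaneously by $|G|$ and by $p$, hence vanishes. Under the irreducibility hypothesis, if $p \nmid |G|$ we fall into the previous case; otherwise $G$ contains a nontrivial $p$-element, necessarily unipotent in $\op{GL}_2(\Z/p\Z)$. Its unique fixed line cannot be $G$-stable by irreducibility, so $G$ contains two unipotents with distinct fixed lines, and these together generate $\op{SL}_2(\Z/p\Z)$. In particular $-I \in G$, and since $-I$ is central and acts as $-1$ on $E[p]$, the standard observation that central elements act trivially on $H^*(G,-)$ via their action on coefficients forces $2$ to annihilate $H^1(G, E[p])$. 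Combined with $p$-torsionness and $p$ odd, this gives the desired vanishing.

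For part \eqref{p2 of lemma 4.7}, the plan is to prove the stronger assertion $\cX = 0$, from which $X = \cX/p\cX = 0$, and hence $\cY = 0$ as a quotient of $X$, giving $Y = 0$. A prime of $K$ not lying above $p$ cannot ramify in the $\Z_p$-extension $K_\infty/K$: its tame inertia would have to be a nontrivial finite cyclic subgroup of $\op{Gal}(K_\infty/K) \cong \Z_p$ of order coprime to the residue characteristic, but $\Z_p$ is torsion-free. Together with the first hypothesis, $K_\infty/K$ therefore has exactly one ramified prime and it is totally ramified. A classical theorem of Iwasawa (cf.\ \cite[Chapter~13]{washington1997introduction}) then yields an isomorphism $\cX/T\cX \cong \op{Cl}(K)_p$, which vanishes by the second hypothesis. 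Consequently $\cX/(p,T)\cX = 0$, and Nakayama's lemma for the local ring $\Lambda = \Z_p\lb T\rb$ (with maximal ideal $(p,T)$) yields $\cX = 0$.

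The main technical obstacle is the group-theoretic step in part \eqref{p1 of lemma 4.7} under the irreducibility hypothesis, namely verifying that an irreducible subgroup of $\op{GL}_2(\Z/p\Z)$ containing a nontrivial $p$-element must contain $\op{SL}_2(\Z/p\Z)$. All other ingredients are standard applications of group cohomology and classical Iwasawa theory.
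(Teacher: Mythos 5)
Your proposal is correct, and for part~\eqref{p1 of lemma 4.7} it takes a genuinely different route from the paper. For the irreducibility case the paper simply cites \cite[Lemma 2.2]{prasad2021relating}; you instead give a self-contained argument: if $p\mid |G|$, use irreducibility to produce two nontrivial unipotents in $G$ with distinct fixed lines, conclude $\op{SL}_2(\Z/p\Z)\subseteq G$ (this is where Dickson's classification, or an explicit generators-and-relations computation, is needed --- you rightly flag it as the technical point, and it does hold for all odd $p$), and then use the center-kills-cohomology trick with $-I$ to kill $H^1(G,E[p])$ by $2$ as well as $p$. This is more elementary and makes the paper more self-contained at the cost of some group theory; the paper's citation is shorter but less transparent. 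The coprimality subcase is identical in both. Your reduction $\mathcal{Z}\hookrightarrow H^1(G,E[p])$ via $E(K_\infty)[p]=E[p]$ and injectivity of inflation matches the paper exactly.

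For part~\eqref{p2 of lemma 4.7}, the paper cites \cite[Proposition 13.22]{washington1997introduction} in one line; you unpack essentially the same content (Iwasawa's $\cX/T\cX\cong A(K)$ for a uniquely and totally ramified $\Z_p$-extension, then Nakayama over $\Z_p\lb T\rb$), so this is the same approach rather than a new one. One small imprecision worth tightening: your reason that primes $v\nmid p$ of $K$ are unramified in $K_\infty/K$ is a bit garbled --- the inertia subgroup $I_v\subseteq\op{Gal}(K_\infty/K)\cong\Z_p$, if nontrivial, is automatically infinite (closed subgroups of $\Z_p$ are $0$ or $\cong\Z_p$), so it is not ``a nontrivial finite cyclic subgroup.'' The correct reason is that, by local class field theory, $I_v$ is a pro-$p$ quotient of $\cO_{K_v}^\times$, whose $p$-part is finite since the residue characteristic is $\ell\neq p$; hence $I_v$ is a finite, and therefore trivial, subgroup of $\Z_p$. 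This is a standard fact (\cite[Proposition 13.2]{washington1997introduction}) and does not affect the correctness of your conclusion.
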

\begin{proof}
\par We first prove part \eqref{p1 of lemma 4.7}. Since $\bar{\rho}$ is assumed to satisfy one (or both) of two conditions, we consider the conditions one at a time. First, assume that $\bar{\rho}$ is irreducible when restricted to $\op{G}_{F_\infty}$. Then, the vanishing of $\mathcal{Z}$ follows from \cite[Lemma 2.2]{prasad2021relating}. Next, assume that the image of $\bar{\rho}$ has cardinality coprime to $p$. Since $E[p]\simeq \left(\Z/p\Z\right)^2$ is a $p$-group, and $G$ has cardinality coprime to $p$, it follows that $H^1(G, E[p])=0$. As a consequence, we deduce that $\mathcal{Z}=0$.
\par Part \eqref{p2 of lemma 4.7} follows from \cite[Proposition 13.22]{washington1997introduction}.
\end{proof}
\begin{theorem}\label{main thm}
Let $E$ be an elliptic curve over a number field $F$ and let $p$ be an odd prime such that $E$ has good reduction at all primes in $S_p$. Let $F_\infty/F$ be a $\Z_p$-extension of $F$. With respect to notation above, assume that Assumption \ref{main assumption} is satisfied. 
Furthermore, suppose that the following conditions are satisfied
\begin{enumerate}
\item the image of $\bar{\rho}$ is non-solvable or has cardinality coprime to $p$, 
\item there is only one prime of $K$ which ramifies in $K_\infty$ and this prime is totally ramified.
\item Suppose that $A(K)=0$.
\end{enumerate}
Then, we find that $R(E/F_\infty)$ is a cotorsion module over $\Lambda$ with $\mu(E/F_\infty)=0$, and\[\lambda(E/F_\infty)\leq \sum_{v\in S_0} 2g_v+\sum_{v\in S_p} \delta_vg_v.\] In particular, if $S_0=\emptyset$ and $E(F_v)[p]=0$ for all primes $v\in S_p$, then, $\lambda(E/F_\infty)=0$ for all $\Z_p$-extensions $F_\infty/F$.
\end{theorem}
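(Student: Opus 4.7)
The plan is to deduce the theorem directly from Theorem \ref{th 4.5} by verifying that, under the three additional hypotheses, both $\mathcal{Z}$ and $Y$ vanish. Once this is established, the bound supplied by Theorem \ref{th 4.5} collapses to the purely local quantity $\sum_{v \in S_0} 2 g_v + \sum_{v \in S_p} \delta_v g_v$, and the final sentence of the statement becomes immediate: $S_0 = \emptyset$ kills the first sum, while $E(F_v)[p] = 0$ for every $v \in S_p$ forces each $\delta_v = 0$ by definition.

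For the vanishing of $\mathcal{Z}$, hypothesis (1) offers two branches. If the image of $\bar{\rho}$ is non-solvable, then since $F_\infty/F$ is abelian the unnamed lemma immediately preceding Lemma \ref{lemma 4.7} guarantees that $\bar{\rho}|_{\op{G}_{F_\infty}}$ is irreducible; if instead the image has order coprime to $p$, no further preparation is needed. In either case Lemma \ref{lemma 4.7}\eqref{p1 of lemma 4.7} applies via the appropriate sub-condition and yields $\mathcal{Z} = 0$.

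For the vanishing of $Y$, I would first observe that $A(K) = 0$ is equivalent to $p \nmid h(K)$, since $A(K)$ is by construction the maximal abelian unramified pro-$p$ extension of $K$. Together with hypothesis (2) on the ramification in $K_\infty/K$, and the standard fact that only primes of $K$ above $p$ can ramify in a $\Z_p$-extension (so that the unique ramified prime in hypothesis (2) necessarily lies above $p$), Lemma \ref{lemma 4.7}\eqref{p2 of lemma 4.7} delivers $Y = 0$. Since $\cY$ is finitely generated over $\Lambda$ and $Y = \cY/p\cY$, topological Nakayama in fact upgrades this to $\cY = 0$, which is strictly stronger than Assumption \ref{class tower ass} and in particular ensures it.

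With $\mathcal{Z} = Y = 0$ and Assumption \ref{class tower ass} in hand (Assumption \ref{main assumption} is given in the statement), Theorem \ref{th 4.5} applies and supplies both the cotorsion/$\mu = 0$ conclusions and the bound on $\lambda(E/F_\infty)$, whose first two summands have just been shown to vanish. The only point requiring care, and which I expect to be the sole genuine subtlety, is reconciling the phrasing of hypothesis (2) (``only one prime of $K$ which ramifies in $K_\infty$'') with the slightly more specific formulation of Lemma \ref{lemma 4.7}\eqref{p2 of lemma 4.7} (``only one prime above $p$''); this is a standard fact about $\Z_p$-extensions of number fields and poses no real obstacle. Apart from that bookkeeping, the proof is a clean application of the two preceding results.
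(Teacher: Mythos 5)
Your proof is correct and follows essentially the same route as the paper's: verify that Lemma \ref{lemma 4.7} gives $\mathcal{Z}=0$ and $Y=0$ under the stated hypotheses, note that $Y=0$ implies Assumption \ref{class tower ass}, and then invoke Theorem \ref{th 4.5}. The only (harmless) overstep is your appeal to topological Nakayama to get $\cY=0$; the paper needs only the weaker conclusion, via the equivalence in Lemma \ref{lemma 4.2}, that $Y=0$ implies Assumption \ref{class tower ass}.
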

\begin{proof}
Note that Lemma \ref{lemma 4.7} implies that $Y=0$ and hence, Assumption \ref{class tower ass} is automatically satisfied. The result follows directly form Theorem \ref{th 4.5} and Lemma \ref{lemma 4.7}. 
\end{proof}

\par \textbf{An Example:} We illustrate the above results through an example. Let us consider the elliptic curve $E$ with Cremona label \href{https://www.lmfdb.org/EllipticCurve/Q/11/a/2}{11a2} and let $p=5$. The residual representation $\bar{\rho}:\op{G}_{\Q}\rightarrow \op{GL}_2(\Z/p\Z)$ is a direct sum of two characters $\bar{\rho}=\bar{\chi}\oplus 1$, where $\bar{\chi}$ is a mod-$p$ cyclotomic character.
\begin{enumerate}
\item\label{example p1} First, consider the case when $F=\Q$ and $F_\infty=\Q_\infty$ is the cyclotomic $\Z_p$-extension of $\Q$. Note that $K=\Q(E[p])$ is the number field $\Q(\mu_p)$, and $K_\infty$ is $\Q(\mu_{p^\infty})$, the cyclotomic $\Z_p$-extension of $K$. The set of primes $S$ is equal to $\{11, p\}$ and $S_0=\{11\}$. All primes of $\Q$ are finitely decomposed in $\Q_\infty$, the cyclotomic $\Z_p$-extension of $\Q$, and hence, Assumption \ref{main assumption} is satisfied. We verify the conditions of Theorem \ref{main thm}. 
\begin{itemize}
\item Clearly the image of $\bar{\rho}$ has cardinality coprime to $p$.
\item Let $\eta_p$ be the prime of $K$ that lies above $p$. Note that $\eta_p$ is the principal ideal generated by $(1-e^{2\pi i/p})$ and that $p \cO_K=\eta_p^{p-1}$. The prime $\eta_p$ is totally ramified in $K_\infty=\Q(\mu_{p^\infty})$.
\item The prime $p=5$ is a regular prime, i.e., $p$ does not divide the class number of $K=\Q(\mu_p)$ (the smallest irregular prime is $37$).
\end{itemize}
We compute $g_{11}$, i.e, the number of primes of $\Q_\infty$ that lie above $11$. Since $5^2\nmid (11^4-1)$, it follows that $g_{11}=1$. On the other hand, $p$ is totally ramified in $\Q_\infty$, hence, $g_p=1$. Theorem \ref{main thm} then implies that $R(E/F_\infty)$ is a cotorsion module over $\Lambda$ with $\mu(E/\Q_\infty)=0$, and\[\lambda(E/\Q_\infty)\leq \sum_{v\in S_0} 2g_v+\sum_{v\in S_p} \delta_vg_v\leq 4.\]
\item Next, suppose that $F=\Q(\mu_p)$. Note that $K=F$ and that there are $p-1=4$ independent $\Z_p$-extensions of $F$. Let $F_\infty/F$ be any $\Z_p$-extension in which $11$ is finitely decomposed and in which $\eta_p$ is totally ramified. Thus in particular, Assumption \ref{main assumption} holds. Note that $K_\infty$ is equal to $F_\infty$. The same arguments as in part \eqref{example p1} above imply that the assumptions of Theorem \ref{main thm} are satisfied. Theorem \ref{main thm} then implies that $R(E/F_\infty)$ is a cotorsion module over $\Lambda$ with $\mu(E/F_\infty)=0$, and\[\lambda(E/F_\infty)\leq \sum_{v\in S_0} 2g_v+\sum_{v\in S_p} \delta_vg_v=\sum_{v|11}2g_{v}+\delta_{\eta_p}.\] When $F_\infty$ is taken to be the cyclotomic $\Z_p$-extension of $F$, then Assumption \ref{main assumption} is clearly satisfied. In this case, we find that $11$ is totally inert in $F_\infty$, and therefore, we find that $\lambda(E/F_\infty)\leq 4$.
\end{enumerate}

\bibliographystyle{alpha}
\bibliography{references}
\end{document}